\newtheorem{thm}{Theorem}[section]
\theoremstyle{definition}
\theoremstyle{remark}
\numberwithin{equation}{section}
\begin{document}
\title{A data-driven method for the steady state of randomly perturbed
  dynamics}
\author{Yao Li}
\address{Yao Li: Department of Mathematics and Statistics, 
University of Massachusetts Amherst, USA}
\email{yaoli@math.umass.edu}

\begin{abstract}
  We demonstrate a data-driven method to solve for the invariant
  probability density function of a randomly perturbed dynamical system. The
  key idea is to replace the boundary condition of numerical schemes
  by a least squares problem corresponding to a reference solution,
  which is generated by Monte Carlo simulation. With this method we
  can solve for the invariant probability density function in any local
  area with high accuracy, regardless of whether the attractor is
  covered by the numerical domain. 
\end{abstract}
\maketitle

\section{Introduction}
Many physical and biological systems are subject to random
perturbations. The time evolution of the probability density function of a
randomly perturbed dynamical system, i.e., a stochastic differential equation, is usually described by the Fokker-Planck
equation \cite{risken1996fokker}. In many studies, the invariant probability density function
of the randomly perturbed system is particularly important. It is well
known that under suitable conditions, the invariant probability
density function solves the steady state Fokker-Planck equation. On
the other hand, the positive solution to the steady state Fokker-Planck
equation must be an invariant probability density function of the
corresponding stochastic differential equation \cite{bogachev1999uniqueness, bogachev2009elliptic,
  huang2015steady}. 

When the unperturbed dynamical system has complex dynamics, an analytical
solution of the Fokker-Planck equation is usually not available. On the other
hand, numerically solving the steady state Fokker-Planck equation on an unbounded
domain is often challenging due to the lack of a well-posed boundary
condition. The usual practice is to let the numerical domain cover the global
attractor of the unperturbed dynamical system with sufficient
margin. The Freidlin-Wentzell theory \cite{freidlin1998random} 
guarantees that the invariant
probability density is close to zero when sufficiently far away from
the global attractor. Then it is usually safe to assume a zero
boundary condition. 

The resolution of the numerical solution imposes additional challenges. When the
strength of a random perturbation is $0 < \sigma \ll 1$, it is known that the
probability density function should concentrate on a
$O(\sigma)$-neighborhood of the attractor \cite{li2016systematic}. Hence the grid size of
the discretization cannot be larger than $\sigma$. Otherwise the
numerical scheme cannot ``see'' the concentration, and sometimes
serious numerical artifacts may occur. Therefore, when the noise
strength is small and the underlying dynamical system has complex
(possibly chaotic) dynamics, the grid size of the discretization has
to be sufficiently small. In addition, chaos only occurs in ordinary
differential equations in dimension $\geq 3$. This makes a numerical study
of interplays between chaos and random perturbations extremely
difficult. Take the Lorenz system for an
example. A very expensive numerical computation in \cite{allawala2016statistics} can
only solve the Fokker-Planck equation corresponding to the Lorenz
system on a $160\times 160 \times 160$ mesh, with a grid size $\approx
0.3$. 

The boundary condition is not a problem any more if one uses Monte Carlo
simulations to compute the invariant probability density function. A Monte Carlo simulation either runs
the stochastic differential equation for a long time, or runs many independent
trajectories of the stochastic differential equation for a finite amount of
time. The Monte Carlo simulation is an efficient way to obtain
statistics such as the expectation of a certain observable. However, the classical Monte Carlo simulation
has severe accuracy problem when solving for the invariant probability
density function. Unless one can generate a huge amount of samples, the probability density function generated by Monte Carlo
simulation is usually too ``noisy'' to be useful. 

In this paper we present a hybrid method that bypasses the
disadvantages of classical numerical PDE approach and the Monte Carlo
simulation. The key idea is to work on a domain without using any
boundary conditions. Then the discretization of a steady state Fokker-Planck equation becomes underdetermined,
which essentially gives a linear
constraint. Instead of the boundary condition, we generate an
approximated invariant probability density function by using the Monte Carlo
simulation. This approximated density function does not have to be
very accurate, because it only serves as a reference of the next step. Finally, we solve
a least squares optimization problem under the linear constraint given
by the discretization. The resultant solution satisfies the
discretization of the steady state Fokker-Planck equation (without a
boundary condition), and has minimum $L^{2}$ distance to the
approximated density function generated by the Monte Carlo simulation. The least
squares problem is further converted to a linear system that can be
solved either exactly or using iterative methods. This method can help
us to compute a high resolution solution in a local area without
worrying about boundary
conditions. 

We demonstrate several numerical examples in this paper. The 1D
double-well potential is used to test the accuracy and performance of
the algorithm. The overall accuracy is satisfactory considering the performance. Then we
demonstrate the strength of this method with 2D and 3D examples. In the 2D
example, we show that a transition from relaxation oscillations to a
smaller limit cycle is destroyed by small noise. A local solution with high
resolution is presented to demonstrate some interesting local
structures in the invariant probability density function. In 3D
examples, we compute invariant probability density functions of small
random perturbations of two chaotic oscillators, the
Lorenz oscillator and the R\"ossler oscillator. With low computation
cost, we are able to find numerical
solutions with much higher resolution (grid size
$= 0.05$) than in previous studies. 

We remark that the purpose of this paper is only to introduce a
general framework. The detailed implementation can be further
improved in many ways. For example, a divide-and-conquer strategy can
significantly improve the performance of this hybrid algorithm. The naive Monte Carlo simulation can
be replaced by various importance sampling techniques
\cite{robert2004monte, tokdar2010importance}. If
the noise is large enough to smear fine structures, the high
dimensional Monte Carlo
sampler proposed in \cite{chen2017beating,
  chen2018efficient} can be adopted to our framework. The finite difference discretization can be replaced by other advanced solvers like the finite element method or
other methods for high dimensional problems \cite{naprstek2016multi,
  sun2015numerical, wang2016sparse}.  We will
write several subsequent papers to address these issues.

\section{Probability and numerics preliminary}
\subsection{Problem setting}
Consider an autonomous ordinary differential equation 
\begin{equation}
  \label{ODE}
x' = f(x) ,\quad x \in \mathbb{R}^{n} \,.
\end{equation}
We are particularly interested in situations when equation \eqref{ODE} generates
non-trivial dynamics. For example, equation \eqref{ODE} may admit a
strange attractor or have
separation of time scales that leads to interesting dynamics like the folding
singularity, mixed mode oscillations
etc. \cite{guckenheimer2005canards, desroches2012mixed}.

Now we consider the following dynamical system with random
perturbations, i.e., a stochastic differential equation (SDE), 
\begin{equation}
\label{SDE}
  \mathrm{d}X_{t} = f(X_{t}) \mathrm{d}t + \sigma(X_{t}) \mathrm{d}W_{t} \,,
\end{equation}
where $X_{t} \in \mathbb{R}^{n}$, $f: \mathbb{R}^{n} \rightarrow
\mathbb{R}^{n}$ is a vector field, $\sigma: \mathbb{R}^{n} \rightarrow
\mathbb{R}^{n\times n}$ is a matrix-valued function, and $\mathrm{d}W_{t}$ is 
the $n$-dimensional white noise. Throughout this paper,  we assume that
equation \eqref{SDE} admits a unique diffusion process solution
$X_{t}$. (See {\bf (H)} below for the full assumption.) Note that the
existence and the uniqueness of $X_{t}$
follow from mild assumptions on $f$ and $\sigma$, e.g., Lipschitz continuity of $f$ and
$\sigma$ \cite{oksendal2003stochastic, karatzas2012brownian}.

Since $X_{t}$ is a diffusion process, we denote the transition kernel of $X_{t}$ by $P^{t}(x, A) =
\mathbb{P}[X_{t} \in A \, | \, X_{0} = x]$. A probability measure
$\pi$ is said to be {\it invariant} if $\pi P^{t} = \pi$, where the
left operator is defined as
$$
 \pi P^{t}(A) = \int_{\mathbb{R}^{n}} P^{t}(x, A) \pi( \mathrm{d}x) \,. 
$$

It is well known that the time evolution of the probability density
function of $X_{t}$, denoted by $u_{t}$, is
described by the Fokker-Planck equation
\begin{equation}
\label{FPE}
  u_{t} = \mathcal{L}u = -\sum_{i = 1}^{n} (f_{i}u)_{x_{i}} +
  \frac{1}{2}\sum_{i,j = 1}^{n} (D_{i,j}u)_{x_{i}x_{j}} \,, \quad u(0,
  x) = u_{0}(x) \,,
\end{equation}
where $D = \sigma^{T}\sigma$, $u_{0}(x)$ is the probability density
function of $X_{0}$. A
probability density function $u_{*}(x)$ is said to be an {\it
invariant probability density function} if $\mathcal{L}u_{*} = 0$. It
is easy to see that an invariant probability density function defines an invariant probability measure $\pi$ of
$X_{t}$, and $u_{*}(x)$ is the probability density function of $\pi$.

The existence of an invariant probability measure is guaranteed if
$X_{t}$ is defined on a compact manifold without boundary \cite{zeeman1988stability}. When
$X_{t}$ is defined on unbounded domain, such existence needs some
``dissipation'' conditions \cite{bogachev2001generalization,
  huang2015steady, khasminskii2011stochastic}. The uniqueness of $\pi$
usually follows if $D$ is non-degenerate (everywhere positive definite). The convergence to the invariant
probability measure is another tricky issue. To make $P^{t}(x, \cdot)
\rightarrow \pi$ as $t \rightarrow \infty$, one needs stronger
``dissipation'' conditions and some minorization-type conditions
\cite{meyn2012markov, hairer2006ergodicity, khasminskii2011stochastic}. Since
the theme of this paper is to introduce a numerical algorithm, we have the following
assumption on $X_{t}$, $P^{t}(x, \cdot)$, and $\pi$ throughout the paper. 

\medskip
{\bf (H)} {\it Equation \eqref{SDE} admits a unique diffusion process
  $X_{t}$. The diffusion process $X_{t}$ has a unique invariant
probability measure $\pi$ that is absolutely continuous with respect
to the Lebesgue measure. The probability density function of $\pi$ uniquely
solves the stationary Fokker-Planck equation. In addition, $P^{t}(x,
\cdot) \rightarrow \pi$ as $t\rightarrow \infty$ for every $x \in
\mathbb{R}^{n}$. }

\subsection{Numerical PDE approach for computing invariant measure.}

There are two different approaches for computing $u_{*}$. One can
either solve the Fokker-Planck equation \eqref{FPE} up to a
sufficiently large $t$, or
solve the stationary Fokker-Planck equation directly. The biggest
problem of the numerical PDE approach is the boundary condition. For
the sake of simplicity, we illustrate the problem by using the finite
difference scheme. The case of the finite element scheme is analogous. 

Without loss of generality, we solve the Fokker-Planck equation
numerically on a 2D domain $[-L, L]^{2}$. Let the spatial and time step sizes be
$r = 2L/N$ and $h$, respectively. Let $\mathbf{u}^{m} = \{u^{m}_{i,j}\}_{i, j = 0}^{N}$ be the
discretized solution at time step $m$. Entry $u^{m}_{i,j}$ is a numerical
approximation of $u(mh, r i - L, rj -L)$. We need boundary conditions to
update the solution to $\mathbf{u}^{m+1}$. The usual approach is to
let the domain cover the global attractor of the ODE \eqref{ODE} with sufficient margin. Then we assume zero boundary conditions
$u^{m}_{0, j} = u^{m}_{N,j} = u^{m}_{i, 0} = u^{m}_{i, N} = 0$ and
compute $\mathbf{u}^{m+1}$ by using either implicit Euler scheme or Crank-Nickson
scheme. Since the probability of $X_{t} \notin [-L, L]^{2}$ is nonzero,
$\mathbf{u}^{m}$ needs to be renormalized after each update such that
$$
  \sum_{i, j = 0}^{N} u^{m}_{i, j} = \frac{1}{r^{2}} \,.
$$
When $\|\mathbf{u}^{m} - \mathbf{u}^{m-1}\|$ is smaller than the error
tolerance $\epsilon$ for some $m = M$, the update is stopped. Now $\mathbf{u}^{M}$
numerically solves the steady state Fokker-Planck equation. In order to make the
numerical solution reliable, the domain has to be sufficiently large
such that $\mathbb{P}[X_{t} \notin  [-L, L]^{2}] \ll 1$ for $t \in [0,
Mh]$ and $1 - \pi([-L, L]^{2}) \ll 1$.

Another approach is to solve the steady state Fokker-Planck equation
directly. Assume the same 2D domain as before. In order to discretize
$\mathcal{L}u_{*}$, the boundary value of $u_{*}$
on $\partial [-L, L]^{2}$ is necessary. The usual practice is to make $L$ large
enough so that $[-L, L]^{2}$ covers the global attractor of equation
\eqref{ODE} with sufficient margin. Then we can assume a zero boundary
condition because of the Freidlin-Wentzell theory \cite{freidlin1998random}. This will
generate a linear system
$$
  A \mathbf{u} = \mathbf{0} \,,
$$
where $A$ is an $n \times n$ nonsingular matrix. To avoid the trivial
solution, one also needs the constraint 
$$
  \mathbf{1}^{T} \mathbf{u} = r^{-2} \,.
$$
This gives an overdetermined linear system
\begin{eqnarray}
\label{lsq1}
A \mathbf{u} & = & \mathbf{0}\\\nonumber
\mathbf{1}^{T} \mathbf{u} &=& r^{-2} \,.
\end{eqnarray}
Let
$$
  \hat{A} = 
\begin{bmatrix}
A\\
\mathbf{1}^{T}
\end{bmatrix}
\quad , \quad \mathbf{b} = 
\begin{bmatrix}
\mathbf{0}\\ r^{-2}
\end{bmatrix} \,.
$$
We can find the least squares solution $\hat{\mathbf{u}}$ that solves the
optimization problem
$$
\min  \| \hat{A} \hat{\mathbf{u}} - \mathbf{b} \|_{2} \,.
$$
The least squares solution $\hat{\mathbf{u}}$ numerically solves the
steady state Fokker-Planck equation.

\subsection{Probabilistic approach for computing invariant measure.}

The numerical PDE approach works reasonably well for 1D and 2D
problems. However, in higher dimension this approach becomes not
practical. In particular, the domain has to be sufficiently large to
cover the global attractor of equation \eqref{ODE} with enough
margin. This imposes great difficulty to many practical problems. For
example, if equation \eqref{ODE} is a Lorenz system, then we need a
grid in a $50 \times 50 \times 50$ box to cover the attractor. (See section 4.3
for more discussion about the Lorenz system.)

An alternative approach is to use Monte Carlo simulation. One can
collect samples of $X_{t}$ over a long trajectory in any dimension,
although the accuracy of the Monte Carlo simulation suffers greatly from
the curse-of-dimensionality. Let $h \ll
1$ be the step size. Let $X_{n} := X_{nh}$ be the numerical time-$h$
sample chain produced by certain numerical method (Euler, Milstein, 
Runge-Kutta .etc) \cite{kloeden2013numerical}. Under certain conditions, $X_{n}$ admits an invariant probability
measure $\pi_{h}$ that converges to $\pi$ as $h \rightarrow 0$
\cite{mattingly2010convergence, mattingly2002ergodicity}. In
addition, $X_{n}$ is a Markov chain. Let $\xi : \mathbb{R}^{n}
\rightarrow \mathbb{R}$ be an observable on $\mathbb{R}^{n}$ and
$\mathbf{N}$ be the number of samples. By the
law of large numbers of Markov chains \cite{meyn2012markov}, we have
$$
  \frac{1}{\mathbf{N}}\sum_{n = 1}^{\mathbf{N}} \xi(X_{n}) \rightarrow \pi(\xi) \quad
  a.s. \,.
$$

Therefore, we can use Monte Carlo simulation to compute the
probability density function of $\pi$. For the sake of simplicity we
consider grid points $\mathbf{u} =  \{u_{i,j}\}_{i, j = 0}^{N}$ in
a 2D domain $[-L, L]^{2}$, such that $u_{i,j}$ is the numerical approximation
of $u_{*}(ir - L, j r - L)$. Let $O_{i,j} = [ir - L - r/2, ir - L +
r/2] \times [jr - L - r/2, jr - L + r/2]$. Then the Monte Carlo
simulation gives
$$
  u_{i,j} = \frac{1}{\mathbf{N} r^{2}}\sum_{n = 1}^{N}
  \mathbf{1}_{O_{i,j}}(X_{n}) 
$$
for some sufficiently large $\mathbf{N}$. In practice, we construct $(N+1)^{2}$
boxes $O_{i,j}$ and simulate $X_{n}$ over a long time period. After
the simulation, $u_{i,j}$ is obtained by counting sample points of
$X_{n}$ falling into $O_{i,j}$.

It is easy to see that the Monte Carlo simulation approach has a significant
disadvantage on the accuracy because it is difficult to collect enough
sample points in each $O_{i,j}$. Without loss of generality, we assume 
$$
  \sum_{n = 1}^{\mathbf{N}}  \mathbf{1}_{O_{i,j}}(X_{n})  = O(r^{2}\mathbf{N}) \,.
$$
If we treat $\mathbf{1}_{O_{i,j}}(X_{n})$ as i.i.d Bernoulli random
variables, some calculation shows that the standard deviation of
$u_{i,j}$ is $O(r^{-1} \mathbf{N}^{-1/2})$. Hence $\mathbf{N}$ has to be
very large to control the standard deviation. For example,
$\mathbf{N}$ needs to be $O(r^{-5})$ to reduce the standard deviation to
$O(r^{2})$. The accuracy problem
will be much worse in higher dimensions. In practice, the solution
obtained from the Monte Carlo simulation usually looks very ``noisy''. 

Despite of its accuracy problem, the Monte Carlo simulation has more
flexibility because of the following reasons. (1) In the Monte Carlo
simulation, the domain $[-L, L]^{2}$ does not have to cover the global
attractor of \eqref{ODE}. (2) The curse of dimensionality is
slightly alleviated if equation \eqref{ODE} has a lower dimensional global
attractor. Because the invariant probability measure $\pi$
concentrates on the vicinity of the global attractor of equation \eqref{ODE}. (3)
Parallel computing is much easier for Monte Carlo simulations. (4)
Some high dimensional sampling technique can be applied to 
improve the Monte Carlo simulation for a large class of dynamical systems.

\section{A hybrid data-driven method}

We propose the following hybrid method that combines the high accuracy
of the numerical PDE approach and the flexibility of the Monte Carlo
simulation. Consider a 2D domain 
$[a_{0}, b_{0}] \times [a_{1}, b_{1}]$ that does not have to cover any
attractor of equation \eqref{ODE}. Let $\mathbf{u} = \{u_{i,j}\}_{i =
  1, j = 1}^{i = N, j = M}$ be the numerical solution. Without loss of
generality assume $r = (b_{0} - a_{0})/N  = (b_{1} - a_{1})/M$. Then
$u_{i,j}$ approximates $u_{*}$ at the grid point $(i r + a_{0}, jr +
a_{1})$. By discretizing the steady state Fokker-Planck equation
$\mathcal{L}u_{*} = 0$ {\it without} any boundary condition, we have a
linear system with normalizing condition
\begin{equation*}
\left \{
\begin{array}{ccc}
B \mathbf{u}& =  & 0 \\
\mathbf{1}^{T} \mathbf{u}& =& r^{-2} \,.
\end{array}
\right .
\end{equation*}
Let
$$
  \hat{B} = 
\begin{bmatrix}
B\\
\mathbf{1}^{T}
\end{bmatrix}
\quad , \quad \mathbf{b} = 
\begin{bmatrix}
\mathbf{0}\\ r^{-2}
\end{bmatrix} \,.
$$
Obviously $\hat{B}$ is not a full-rank matrix. Hence we obtain a
linear constraint $\hat{B} \mathbf{u} = \mathbf{b}$.

Then we run the Monte Carlo simulation to get another approximate
solution $\mathbf{v} = \{v_{i,j}\}_{i =
  1, j = 1}^{i = N, j = M}$. Let $O_{i,j} = [a_{0} + ir - r/2, a_{0} + ir + r/2] \times
[a_{1} + jr - r/2, a_{1} + jr + r/2] $. Let $\mathbf{N}$ be a 
large number, the Monte Carlo simulation gives
$$
  v_{i,j} = \frac{1}{\mathbf{N} r^{2}}\sum_{n = 1}^{N}
  \mathbf{1}_{O_{i,j}}(X_{n})  \,.
$$
The approximate solution $\mathbf{v}$ in this step does not have to be
very accurate. We will use it in the next step to obtain a much more
accurate numerical solution $\mathbf{u}$. 

The key step of this hybrid solution is to solve the following optimization
problem. We combine the linear constraint with the ``noisy'' data
$\mathbf{v}$ obtained from the Monte Carlo simulation. The idea is
that $\mathbf{u}$ should both satisfy the linear constraint from the
discretization and be as close to $\mathbf{v}$ as
possible. This leads to the optimization problem
\begin{eqnarray}
\label{lsq}
 &  \mbox{min}& \| \mathbf{u} - \mathbf{v} \|_{2} \\\nonumber
&\mbox{subject to }& \hat{B} \mathbf{u} = \mathbf{b} \,.
\end{eqnarray}
Let $\mathbf{x} = \mathbf{u} - \mathbf{v}$, this reduces to the problem
\begin{eqnarray}
\label{lsqnew}
 &  \mbox{min}& \| x \|_{2} \\\nonumber
&\mbox{subject to }& \hat{B} \mathbf{x} = \mathbf{d} \,,
\end{eqnarray}
where 
$$
  \mathbf{d} = \mathbf{b} - \hat{B} \mathbf{v} \,.
$$
The following theorem is a straightforward textbook result. (See for
example \cite{boyd2004convex}.) We include the proof of the sake of
completeness of the paper. 

\begin{thm}
If $\hat{B}$ has linearly independent rows, then
\begin{equation}
\label{normal}
  \mathbf{\hat{x}} = \hat{B}^{T}(\hat{B} \hat{B}^{T})^{-1} \mathbf{d} 
\end{equation}
is the unique solution of \eqref{lsqnew}.
\end{thm}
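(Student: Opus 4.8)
The plan is to prove this via the orthogonal decomposition of the ambient space $\mathbb{R}^{NM}$ induced by $\hat{B}$, which is the cleanest route for establishing feasibility, minimality, and uniqueness all at once. First I would check that the proposed $\hat{\mathbf{x}}$ is well-defined and feasible. Since $\hat{B}$ has linearly independent rows, the Gram matrix $\hat{B}\hat{B}^{T}$ is symmetric positive definite, hence invertible; this is the single place where the rank hypothesis enters, and it is the only nontrivial fact I need. Feasibility is then immediate: $\hat{B}\hat{\mathbf{x}} = \hat{B}\hat{B}^{T}(\hat{B}\hat{B}^{T})^{-1}\mathbf{d} = \mathbf{d}$.

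The key step is to compare $\hat{\mathbf{x}}$ against an arbitrary feasible point. Let $\mathbf{x}$ be any vector with $\hat{B}\mathbf{x} = \mathbf{d}$. Then $\hat{B}(\mathbf{x} - \hat{\mathbf{x}}) = \mathbf{0}$, so $\mathbf{x} - \hat{\mathbf{x}} \in \ker(\hat{B})$. On the other hand, $\hat{\mathbf{x}} = \hat{B}^{T}\mathbf{y}$ with $\mathbf{y} = (\hat{B}\hat{B}^{T})^{-1}\mathbf{d}$, so $\hat{\mathbf{x}} \in \mathrm{range}(\hat{B}^{T})$. Because $\ker(\hat{B})$ and $\mathrm{range}(\hat{B}^{T})$ are orthogonal complements, I get $\langle \hat{\mathbf{x}}, \mathbf{x} - \hat{\mathbf{x}}\rangle = 0$. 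The Pythagorean identity then yields
$$
\|\mathbf{x}\|_{2}^{2} = \|\hat{\mathbf{x}}\|_{2}^{2} + \|\mathbf{x} - \hat{\mathbf{x}}\|_{2}^{2} \geq \|\hat{\mathbf{x}}\|_{2}^{2},
$$
with equality exactly when $\mathbf{x} = \hat{\mathbf{x}}$. This proves both that $\hat{\mathbf{x}}$ minimizes $\|\mathbf{x}\|_{2}$ over the feasible set and that the minimizer is unique, which is the full claim.

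An alternative I would keep in reserve is the Lagrange multiplier route: form $L(\mathbf{x},\lambda) = \tfrac{1}{2}\|\mathbf{x}\|_{2}^{2} + \lambda^{T}(\hat{B}\mathbf{x} - \mathbf{d})$, set $\nabla_{\mathbf{x}} L = \mathbf{x} + \hat{B}^{T}\lambda = \mathbf{0}$ to obtain $\mathbf{x} = -\hat{B}^{T}\lambda$, and substitute into the constraint to solve $\hat{B}\hat{B}^{T}\lambda = -\mathbf{d}$, recovering the same formula; since the objective is strictly convex and the constraint affine, this stationary point is automatically the unique global minimizer. I expect no genuine obstacle in either version. The only ingredient that uses the hypothesis is the invertibility of $\hat{B}\hat{B}^{T}$, and everything else rests on the standard orthogonality between the null space of $\hat{B}$ and its row space, so I would present the decomposition argument as the main proof and mention the multiplier computation only as a remark.
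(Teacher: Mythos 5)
Your proof is correct and takes essentially the same route as the paper: verify feasibility, then show $\hat{\mathbf{x}}$ is orthogonal to $\mathbf{x}-\hat{\mathbf{x}}$ for any feasible $\mathbf{x}$, and conclude via the Pythagorean expansion $\|\mathbf{x}\|_{2}^{2}=\|\hat{\mathbf{x}}\|_{2}^{2}+\|\mathbf{x}-\hat{\mathbf{x}}\|_{2}^{2}$. Your write-up is in fact slightly more careful than the paper's: you justify the invertibility of $\hat{B}\hat{B}^{T}$ from the rank hypothesis and make the equality case (hence uniqueness) explicit, whereas the paper asserts both implicitly, and its cross-term computation contains a typo, writing $(\hat{B}^{T}\hat{B})^{-1}$ where $(\hat{B}\hat{B}^{T})^{-1}$ is meant.
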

\begin{proof}
It is easy to see that $\mathbf{\hat{x}}$ solves the linear constraint
$\hat{B} \mathbf{\hat{x}} = \mathbf{d}$. 

For any vector $\mathbf{x} \neq \mathbf{\hat{x}}$ satisfying $\hat{B}
\mathbf{\hat{x}} = \mathbf{d}$, we have
\begin{eqnarray*}
\|\mathbf{x}\|^{2} & =  & \| \mathbf{\hat{x}} + \mathbf{x} -
                          \mathbf{\hat{x}} \|^{2} \\
&=& \| \mathbf{\hat{x}}\|^{2} + 2 \mathbf{\hat{x}}^{T}( \mathbf{x} -
    \mathbf{\hat{x}}) + \| \mathbf{x} - \mathbf{\hat{x}} \|^{2} \,.
\end{eqnarray*}
Since $\hat{B} \mathbf{x} = \mathbf{d}$, we have
$$
   \mathbf{\hat{x}}^{T}( \mathbf{x} -
    \mathbf{\hat{x}}) = \mathbf{d}^{T}(\hat{B}^{T}\hat{B})^{-1}
    \hat{B}( \mathbf{x} - \mathbf{\hat{x}}) = 0 \,.
$$
Therefore, we have
$$
  \|\mathbf{x}\|^{2} = \| \mathbf{\hat{x}}\|^{2} + \| \mathbf{x} -
  \mathbf{\hat{x}} \|^{2} \geq \| \mathbf{\hat{x}} \|^{2} \,.
$$
This completes the proof.
\end{proof}

An efficient way to solve equation \eqref{normal} is to use the QR
factorization $\hat{B}^{T} = QR$. After a QR factorization, we have 
$$
  \mathbf{\hat{x}} = Q (R^{-1})^{T} \mathbf{d} \,.
$$
Then $\mathbf{u} = \mathbf{\hat{x}} + \mathbf{v}$ is the desired numerical
solution.

Heuristically, it is easy to see that the optimization problem
\eqref{lsq} can significantly reduce the error of the data
$\mathbf{v}$ generated by the Monte Carlo
simulation. Assume the Monte Carlo sampler does not have significant
bias. Let $\mathbf{u}_{*} = \{u_{*}^{i,j}\}_{i = 1, j = 1}^{i = N, j = M}$ be the vector corresponding to the exact
solution, i.e., $u_{*}^{i,j} = u_{*}(ir + a_{0}, jr + b_{0})$. Then the error term $\mathbf{w} := \mathbf{v} - \mathbf{u}_{*}$ can be
approximated by a random vector such that each entry has zero
expectation and finite variance. Let $\hat{\mathbf{u}}$ be the
solution to the optimization
problem \eqref{lsq}. Then the new error term $\mathbf{\hat{w}} := \hat{\mathbf{u}} - \mathbf{u}_{*}$ is
approximated by $\mathcal{P}(\mathbf{v} - \mathbf{u}_{*})$, where
$\mathcal{P}$ is the projection operator corresponding to the hyperplane
given by $\hat{B} \mathbf{u} = \mathbf{b}$. With high probability, the
projection of a random vector $\mathbf{w}$ to a much lower dimensional
hyperplane has
much smaller norm. In other words we have $\|\hat{\mathbf{w}}\| \ll \|
\mathbf{w}\|$ with high probability. The full proof
is much longer than the above heuristic description. Since the aim of
this paper is to introduce the algorithm, we prefer to put the rigorous proof
about this algorithm into our subsequent paper.

Finally, we remark that one significant advantage of this approach is that we can obtain a high
resolution solution in any local area. There is no restriction on the
domain as long as the Monte Carlo simulation can produce enough sample
points. If a global solution is necessary, we can
divide the space into many subdomains $I_{1}, \cdots, I_{K}$, solve them separately, and combine
them together according to the Monte Carlo simulation. (The
probability of a subdomain $\pi(I_{k})$ can be obtained from the Monte
Carlo simulation, which is the weight of $I_{k}$ when generating the
global solution.) This divide-and-conquer strategy allows us to solve
large scale problems ($\sim 10^{9}$ grid points) on a laptop. We will address it
in full details in our subsequent paper.

\section{Numerical examples}
We will illustrate our hybrid approach with three examples: the double
well potential gradient flow, the Van der Pol oscillator, and 3D chaotic
oscillators.
\subsection{Double-well potential and error analysis}
The first example is the gradient flow with respect to a double-well
potential. Consider the potential function 
$$
  U = \frac{1}{2} x^{4} - x^{2} 
$$
and the stochastic differential equation
$$
  \mathrm{d}X_{t} = - U'(X_{t}) \mathrm{d}t + \sigma \mathrm{d}W_{t}
$$
for $\sigma = 0.6$. The probability density function of this system
is 
$$
  u_{*}(x) = \frac{1}{K}e^{-2U(x)/\sigma^{2}} \,,
$$
where $K$ is a normalizer. 

In this example, we demonstrate our method by solving for $u_{*}(x)$ on the interval of
$[0, 2]$. Note that this system has two equilibria at $\pm
1$. Equilibrium  $x = -1$ is not covered by the numerical domain $[0, 2]$. In
particular, we have $u_{*}(0) = 0.1062$ so the zero boundary
condition does not apply to this problem. We also show the accuracy
and computational time of the hybrid method in this example. 

Let $X_{n}$ be the discrete-time Markov chain given by the
Euler–Maruyama method with a fixed time step size $\mathrm{d}t = 0.001$. The
Monte Carlo simulation is done by simulating $X_{n}$ up to time $T = \mathbf{N}h$. The
numerical solution from the optimization problem, the exact probability density function $u_{*}(x)$, and the
approximate density function from Monte Carlo simulation are compared in
Figure \ref{1d}. We can see that the Monte Carlo
simulation itself only produces a solution with low accuracy, unless
one can collect a huge amount of samples. In fact, when $h =
0.005$, $T$ needs to be at least $10^{5}$ to remove the sawtooth in
the solution (the orange plot in Figure \ref{1d}). The rough solution
generated by the Monte Carlo simulation is smoothed and corrected by
the linear constraint (the red plot in Figure \ref{1d}).

\begin{figure}[htbp]
\centerline{\includegraphics[width = \linewidth]{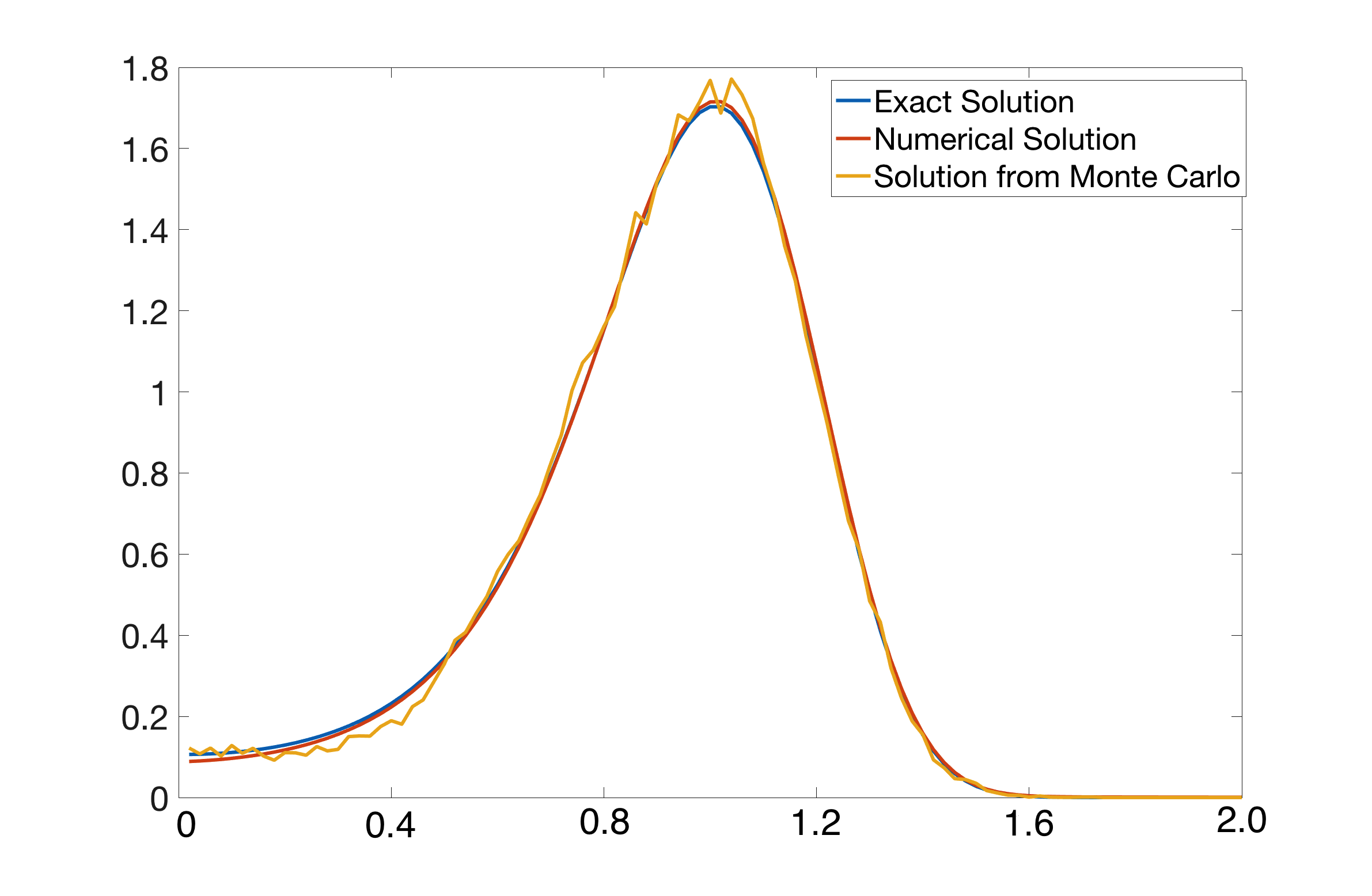}}
\caption{A comparison of exact solution, solution from Monte Carlo,
  and solution from the hybrid method.}
\label{1d}
\end{figure}

The following table
compares $L^{2}$ errors with varying time span $T$ and grid size $h$. Each
entry is the average $L^{2}$ error of $5$ trials. We can see that
despite some randomness caused by the Monte Carlo simulation, the
error drops with smaller grid size and larger sample size for the Monte Carlo
simulation. Note that the invariant probability measure of $X_{n}$ is
only an $O(\mathrm{d}t) = 1.0 \times 10^{-3}$ 
approximation of that of $X_{t}$. Hence it does not make
sense to test the accuracy with more samples or smaller grid
size unless one makes the time step even smaller. In this small-scale
1D problem, the classical numerical PDE solver is more accurate,
mainly because the invariant probability measure of $X_{n}$ is only a
first order approximation of that of $X_{t}$. But overall the accuracy is
satisfactory given the performance of the algorithm. 

\begin{center}
\begin{table}
\begin{tabular}[h]{|c|c|c|c|c|}
\hline
$T \setminus h$ & 0.04&0.02&0.01&0.005\\
\hline
$500$& $6.233\times 10^{-3}$ & $5.971 \times 10^{-3}$ & $2.260 \times
                                                    10^{-3}$ & $2.120
                                                               \times
                                                               10^{-3}$\\
\hline
$1000$ & $4.289\times 10^{-3} $&$5.679 \times 10^{-3}$&$2.137\times
                                                      10^{-3}$&$2.893\times
                                                                10^{-3}$\\
\hline
$2000$& $2.240\times 10^{-3}$&$2.562\times 10^{-3}$&$2.483\times
                                                   10^{-3}$&$1.476\times
                                                             10^{-3}$\\
\hline
$4000$&$1.914\times 10^{-3}$&$2.159\times 10^{-3}$&$1.339\times
                                                  10^{-3}$&$0.656\times
                                                            10^{-3}$\\
\hline
\end{tabular}
\caption{Accuracy of the hybrid method with respect to different sample
sizes and grid sizes.}
\label{table1}
\end{table}
\end{center}

It remains to comment on the computation time. We choose smaller grid
sizes $h = 2 \times
10^{-4}$, $1 \times 10^{-4}$, and $5 \times 10^{-5}$ to
highlight the difference between different approaches. The time span
of Monte Carlo simulation is chosen to be $T = 4000$. In order to
apply the numerical PDE approach directly, one needs to enlarge the
domain to $[-2, 2]$ to apply the zero boundary condition. The
direct Monte Carlo simulation is too slow to be interesting if one wants to achieve the
same accuracy. Hence we only compare the numerical PDE approach and the 
hybrid method in the following table. The computation time for
the hybrid method is further broken down in to the Monte Carlo
simulation phase (Phase 1) and the optimization phase (Phase 2). The
numerical PDE approach uses the MATLAB solver {\it mldivide} (the backslash solver). The Monte
Carlo simulation is written in C++. The optimization problem equation
\eqref{lsq} is solved by the MATLAB solver {\it lsqminnorm}. 

\begin{center}
\begin{table}
\begin{tabular}[h]{|c|c|c|c|c|}
\hline
grid size $h$ & Numerical PDE&Hybrid (Total)&Hybrid (phase 1)&Hybrid (phase 2)\\
\hline
$2\times 10^{-4}$ & $0.05943$ sec &$0.2775$ sec &$0.2771$ sec &
                                                                $0.004236$
                                                                sec \\
\hline
$1 \times 10^{-4}$ &$0.2565$ sec&$0.2668$ sec &$0.2663$ sec &
                                                              $0.004887$
                                                              sec \\
\hline
$5\times 10^{-5}$ & $1.111$ sec& $0.2618$ sec & $0.2609$ sec &
                                                               $0.009218$
                                                               sec \\
\hline
\end{tabular}
\caption{Computational time for the numerical PDE approach and the
  hybrid method. }
\label{table2}
\end{table}
\end{center}

From Table \ref{table2}, we can find that solving the optimization
problem \eqref{lsq} is actually much faster than solving the least
squares problem \eqref{lsq1}. This is because the numerical PDE approach
has to cover both equilibria with considerable margin in order to apply the zero
boundary condition. In addition, solving the overdetermined least
squares problem in equation \eqref{lsq1} takes more time for the MATLAB
solver we use. In this problem, the Monte Carlo simulation is the bottleneck of the hybrid
method. In higher dimensional problems, such as the Lorenz oscillator in Section 4.3, solving
the optimization problem \eqref{lsq} usually takes much more time than
the Monte Carlo simulation. And the classical numerical PDE approach becomes
not practical any more, because the domain still has to cover the entire
attractor in order to apply the zero boundary condition.

\subsection{Van der Pol oscillator and canard}

The second example is a Van der Pol oscillator, which has been
intensively used in both physics and mathematical biology
\cite{guckenheimer1980dynamics, kanamaru2007van}. In this
subsection, we use our method to demonstrate an interesting phenomenon
related to the canard solution. 

Consider an oscillator 
\begin{eqnarray}
\label{canard}
 \dot{x}& =   &\frac{1}{\epsilon}(y - \frac{1}{3}x^{3} + x) \\\nonumber
\dot{y} &=& a - x \,,
\end{eqnarray}
where $\epsilon = 0.1$ is the time scale separation parameter, and $a$
is a control parameter. This system is a prototypical example for the
canard explosion. A ``canard'' is a solution that the system can pass
a bifurcation point of the critical manifold and follow the repelling
part of the slow manifold for some amount of time \cite{benoit1990canards}. Usually a canard
solution only exists for a very small range of parameters. 

\begin{figure}[htbp]
\centerline{\includegraphics[width = \linewidth]{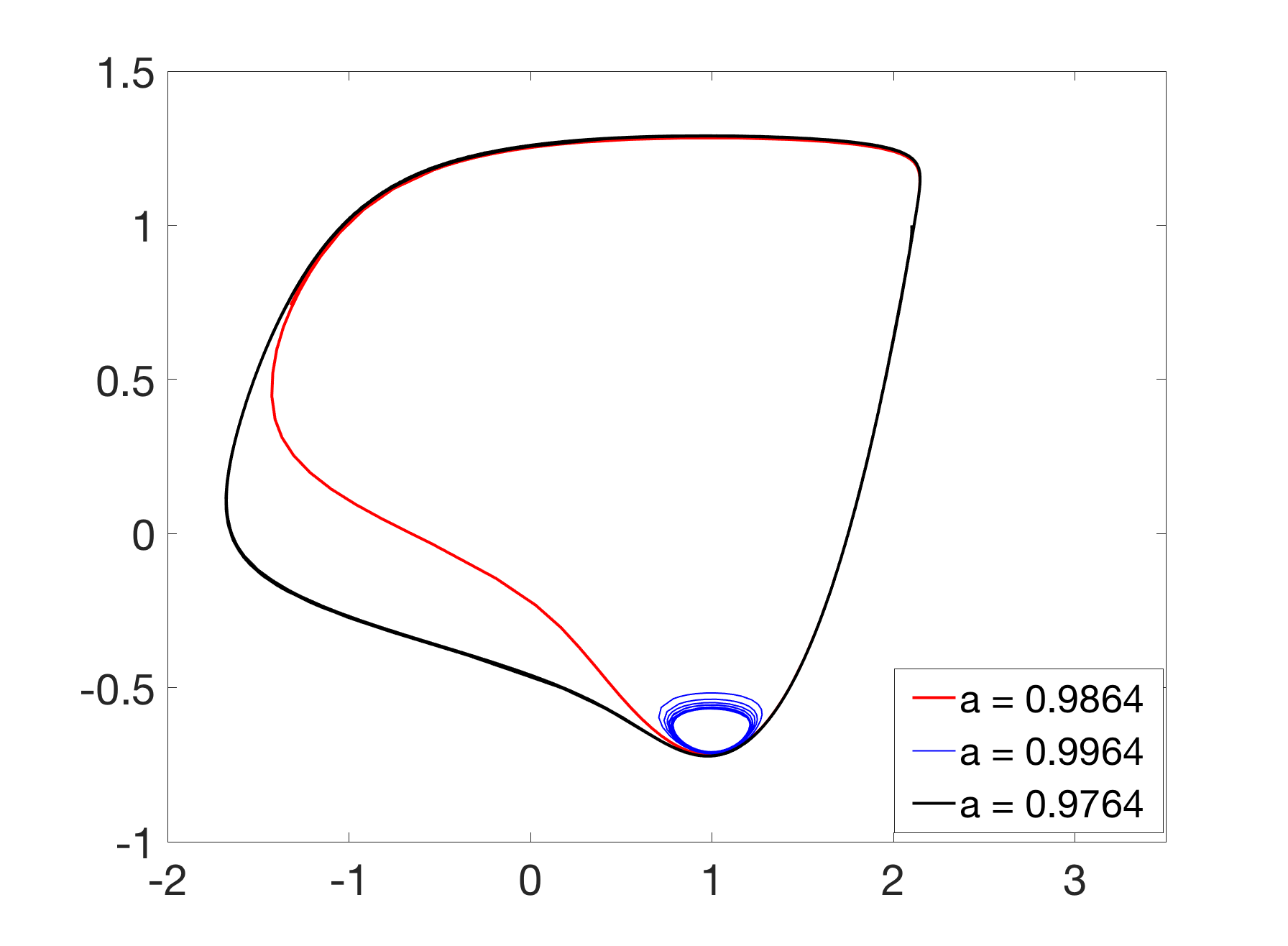}}
\caption{Bifurcation of the Van der Pol oscillator. When $a$ is small,
  the system demonstrates relaxation oscillations (Black). The red solution
  is a canard solution, at which the solution follows the repelling
  part of the slow manifold for a short period of time. With
  increasing $a$, a transition through the canard occurs and the
  solution follows a smaller limit cycle (Blue).}
\label{canarddet} 
\end{figure}

We consider the random perturbation of system \eqref{canard}
\begin{eqnarray}
\label{canardsde}
 \mathrm{d}X_{t}& =   &\frac{1}{\epsilon}(Y_{t} - \frac{1}{3}X_{t}^{3}
                        + X_{t})  \mathrm{d}t+ \sigma \mathrm{d}W^{1}_{t}\\ \nonumber
\mathrm{d}Y_{t} &=& (a - X_{t}) \mathrm{d}t + \sigma \mathrm{d}W^{2}_{t}  \,,
\end{eqnarray}
where  $W^{1}_{t}$ and $W^{2}_{t}$ are two independent Wiener processes. The
parameter $a$ is chosen to be $0.9964$, at which the deterministic
system has
already passed the canard bifurcation and is attracted to a smaller limit
cycle (blue curve in Figure \ref{canarddet}). We use our hybrid method to
compute the density function $u_{*}(x)$ of the invariant probability measure of
system \eqref{canardsde}. Our numerical result shows that this
transition through a canard solution is essentially destroyed by
a small random perturbation. Although the deterministic system admits a
smaller limit cycle, the steady state probability density function $u_{*}(x)$
still concentrates on the large limit cycle corresponding to the relaxation
oscillations (the black curve in Figure \ref{canarddet}). When the strength of noise increases, the support of $u_{*}(x)$ not
only becomes ``wider'', but also has significant
deformation. When the noise is large ($\sigma = 1.0$), some
probability density moves to the slow manifold that does not belong to
any limit cycle of the deterministic system and forms two
``tails''. With the hybrid method
introduced in this paper, we can get a high resolution local solution
about the lower left ``tail'' with much higher precision (Panel 6 of
Figure \ref{canardlocal}). 

\begin{figure}[htbp]
\centerline{\includegraphics[width = \linewidth]{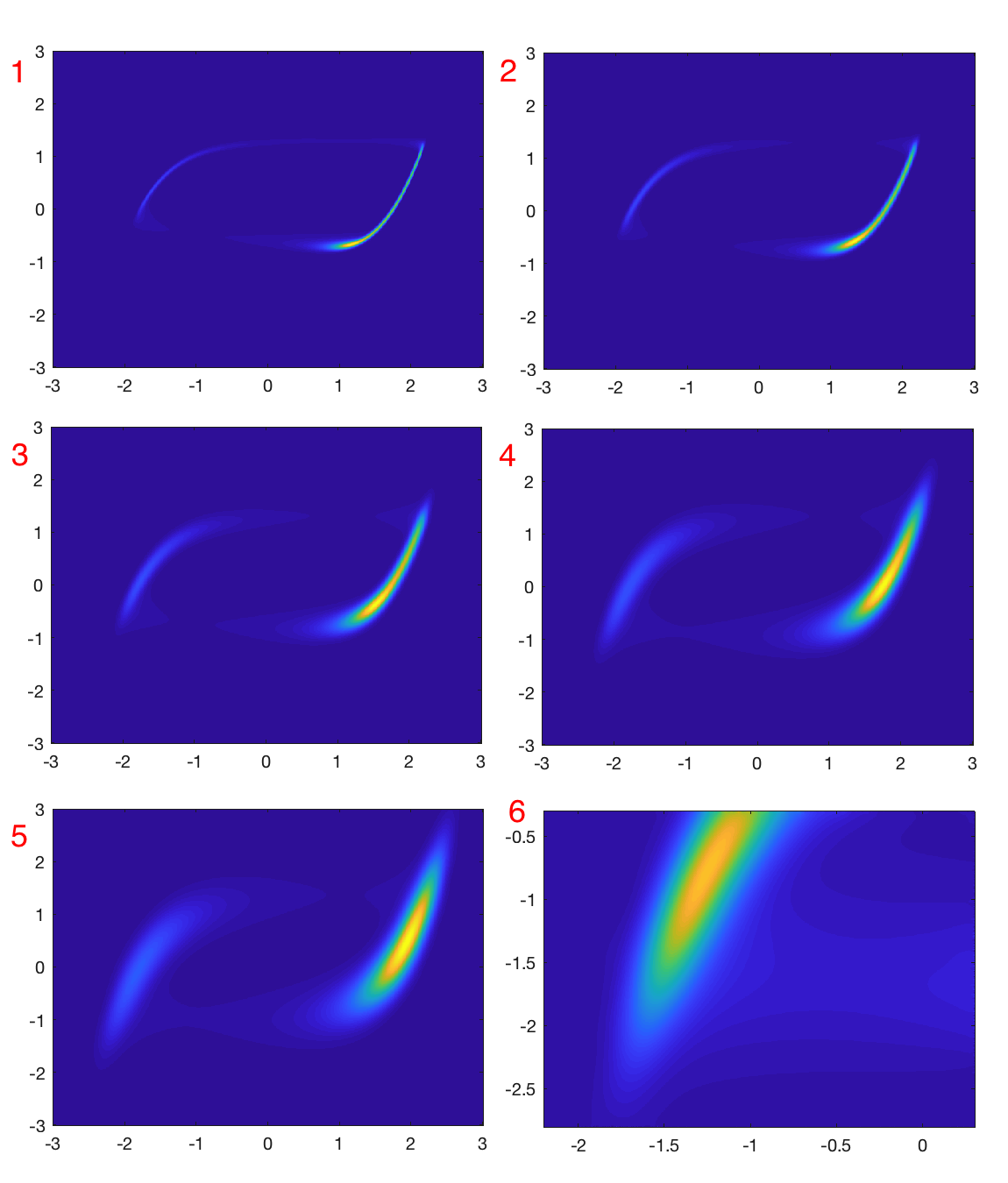}}
\caption{Numerical solutions of the invariant probability density
  function with different $\sigma$. Panel 1 to 6: (1), $\sigma = 0.1$;
  (2), $\sigma = 0.2$; (3), $\sigma = 0.4$; (4), $\sigma = 0.7$; (5),
  $\sigma = 1.0$; (6) local solution of $\sigma = 1.0$ showing the
  lower left ``tail''.}
\label{canardlocal}
\end{figure}

\subsection{3D chaotic oscillators under random perturbations}
The hybrid method demonstrates its full strength in 3D systems. In
this subsection, we compute numerical invariant probability 
density functions of two randomly perturbed chaotic systems: the Lorenz oscillator and
the R\"ossler attractor. Both of them are typical chaotic oscillators
that play a significant role in the study of nonlinear physics and
dynamical systems \cite{rossler1976equation, gaspard2005rossler,
  afraimovich1977origin, lorenz1963deterministic}. 

For the Lorenz system, we mean
\begin{eqnarray}
\label{Lorenz}
 \dot{x}& =  & a (y - x)\\\nonumber
\dot{y} &=&x(b - z) - y \\\nonumber
\dot{z} &=& xy - c z 
\end{eqnarray}
with typical parameters $a = 10$, $b = 28$, and $c = 8/3$. It is well known that system \eqref{Lorenz} has a butterfly-shape
strange attractor. The R\"ossler attractor is a chaotic oscillator
that has the similar mechanism as the Lorenz oscillator. We have
\begin{eqnarray}
\label{Rossler}
 \dot{x}& =  & -y - z\\\nonumber
\dot{y} &=&x + ay \\\nonumber
\dot{z} &=& b + z(x-c) \,.
\end{eqnarray}
Again, we use typical parameters $a = 0.2, b = 0.2$, and $c = 5.7$. 

We are interested in invariant probability density functions of random
perturbations of system \eqref{Lorenz} and system \eqref{Rossler}. In
both systems, a perturbation term $\sigma \mathrm{d} \mathbf{W}_{t}$
is added to the deterministic part, where $\sigma > 0$ is the strength
of noise, and $\mathbf{W}_{t}$ is the standard Wiener process in
$\mathbb{R}^{3}$. Needless to say, it is extremely difficult to solve
a steady state Fokker-Planck equation in 3D on a large domain. Take
the Lorenz oscillator as an example. If the numerical domain has to cover the
attractor, we will solve a 3D equation on a cube $[-25, 25]\times
[-25, 25]\times [0, 50]$. When the grid size is $0.05$, the resultant
numerical solution will have $10^{9}$ grid points. Solving
such a large linear system is very computationally expensive. 

The hybrid method provides an approach to solve a 3D steady state Fokker-Planck
equation locally with high resolution and low cost. If the global
solution is still necessary, one can numerically solve many local
solutions and ``glue'' them together. In Figure \ref{det}, we choose a small box on the attractor as the domain (the
red rectangle). The attractor is projected to the XY-plane for the purpose of
demonstration. Note that the Lorenz oscillator is rotated by a
rotation matrix for the purpose of easier demonstration and mesh
generation. The heights of both domains are $1.0$. Then we use our
hybrid method to compute the invariant probability density
function. The strength of noise is chosen to be $\sigma = 0.3$ for the
Lorenz attractor and $\sigma = 0.1$ for the R\"ossler attractor. We
use larger $\sigma$ because the Lorenz system has a much bigger
attractor. The grid size is $0.05$ for both examples. 

The numerical solution is demonstrated in Figure \ref{rand}, in
which the solution is integrated with respect to $z$ for the purpose
of easier visualizations. We can see that both
invariant probability density functions reveal lots of fine structures of the
strange attractors. And the probability density is higher near the center
of the attractor. In constrast to the very computationally expensive global problem,
it only takes a laptop about $10$ minutes to generate such a local
solution on MATLAB.

\begin{figure}
  \begin{minipage}[b]{0.5\linewidth}
\includegraphics[width=\linewidth]{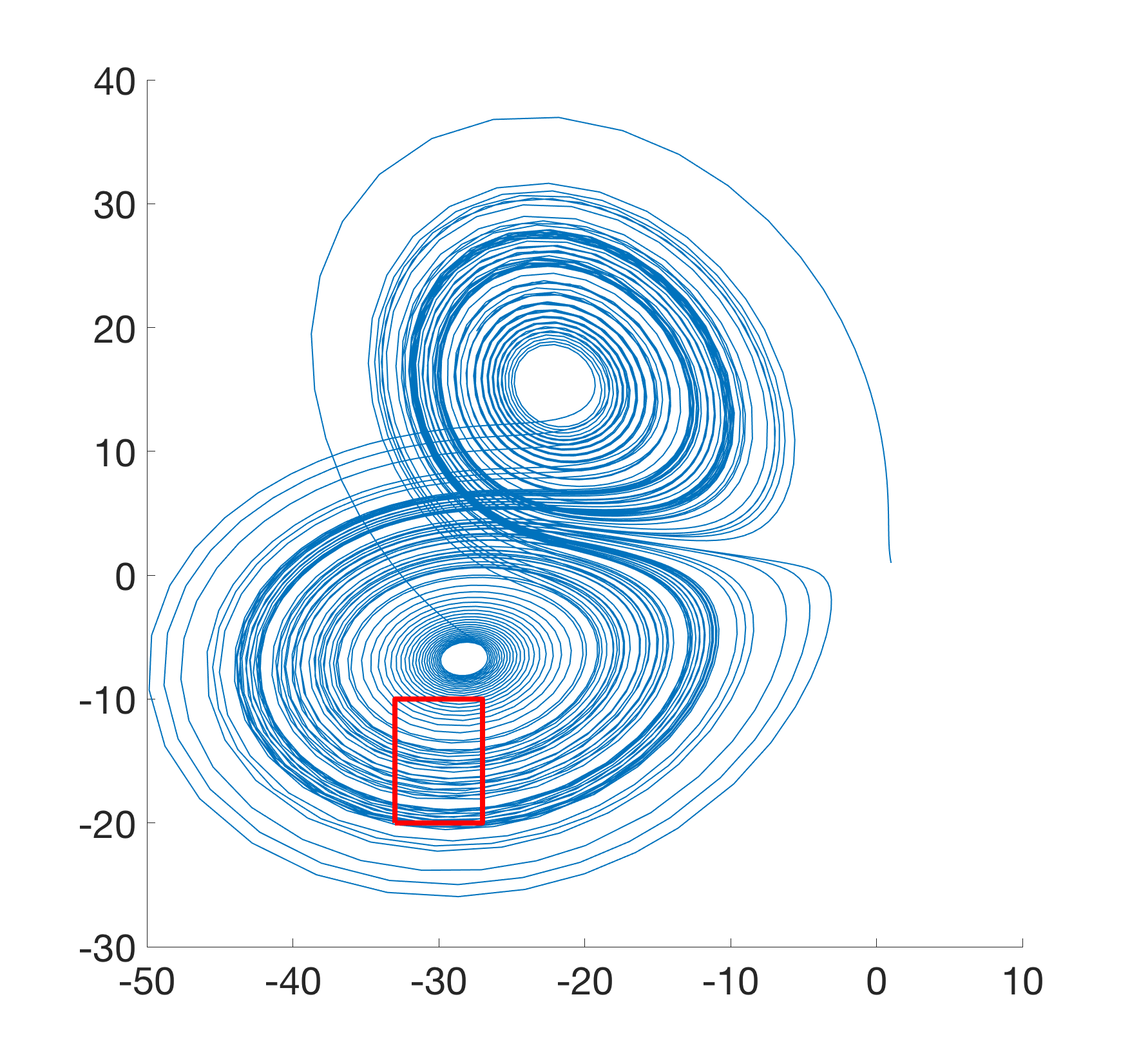}
  \end{minipage}%
  \begin{minipage}[b]{0.5\linewidth}
  \includegraphics[width=\linewidth]{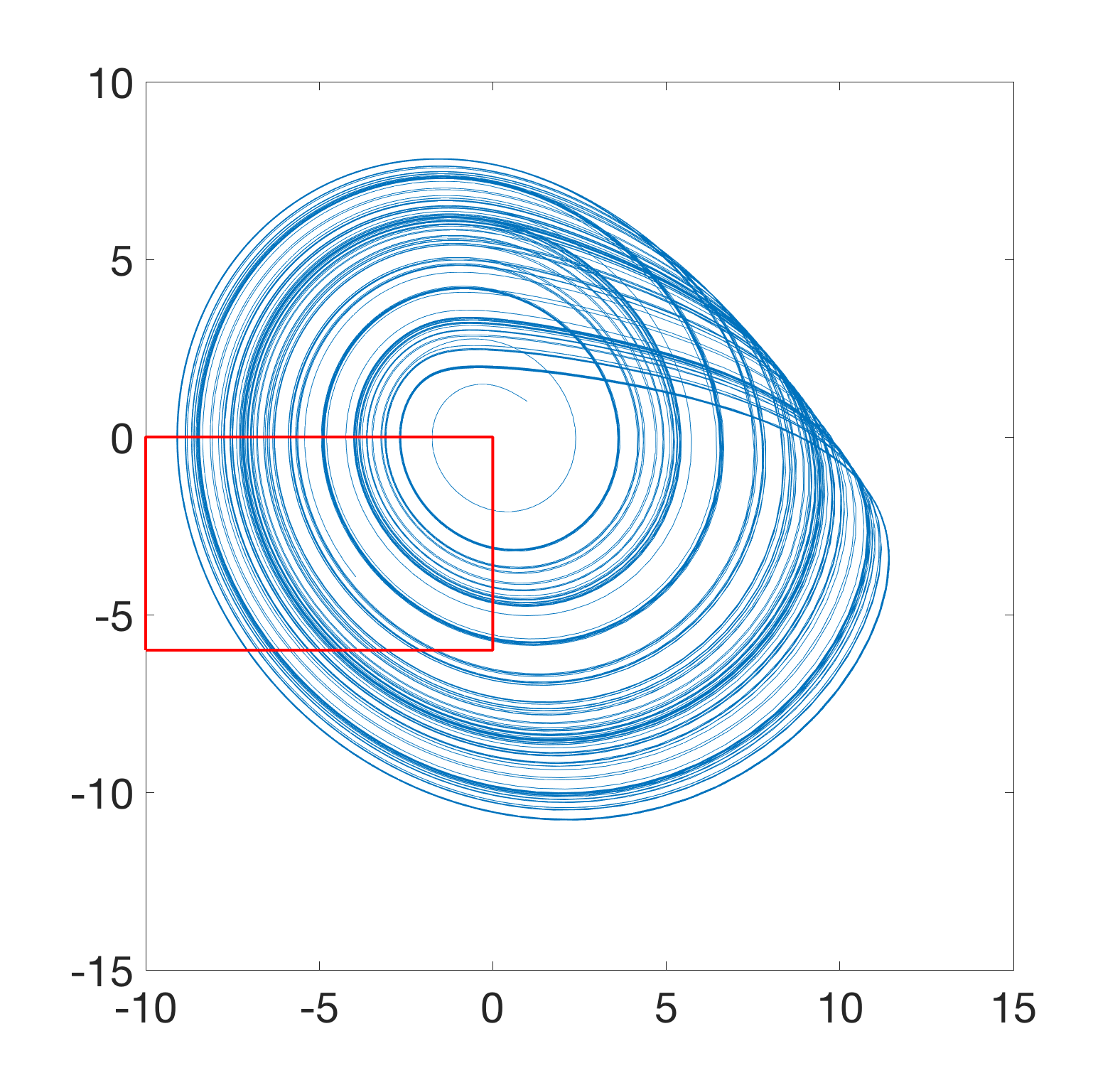}

  \end{minipage}
  \caption{Left: Trajectory of Lorenz oscillator. Right: Trajectory of
    the R\"ossler oscillator. Both: Solution is projected to the
    XY-plane. Red box is the domain of the numerical solution.}
\label{det}
\end{figure}

\begin{figure}
  \begin{minipage}[b]{0.4\linewidth}
\includegraphics[width=\linewidth]{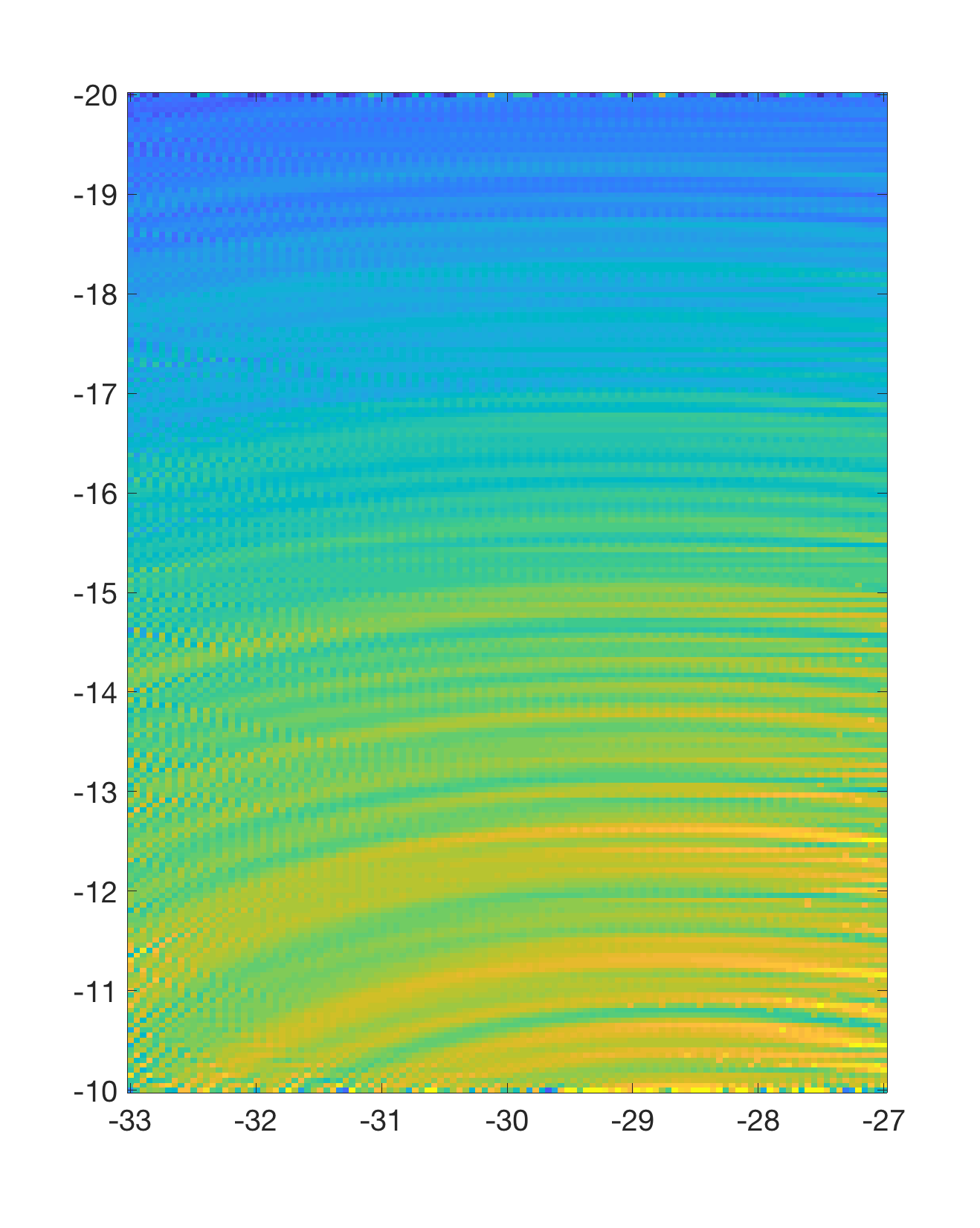}
  \end{minipage}%
  \begin{minipage}[b]{0.6\linewidth}
  \includegraphics[width=\linewidth]{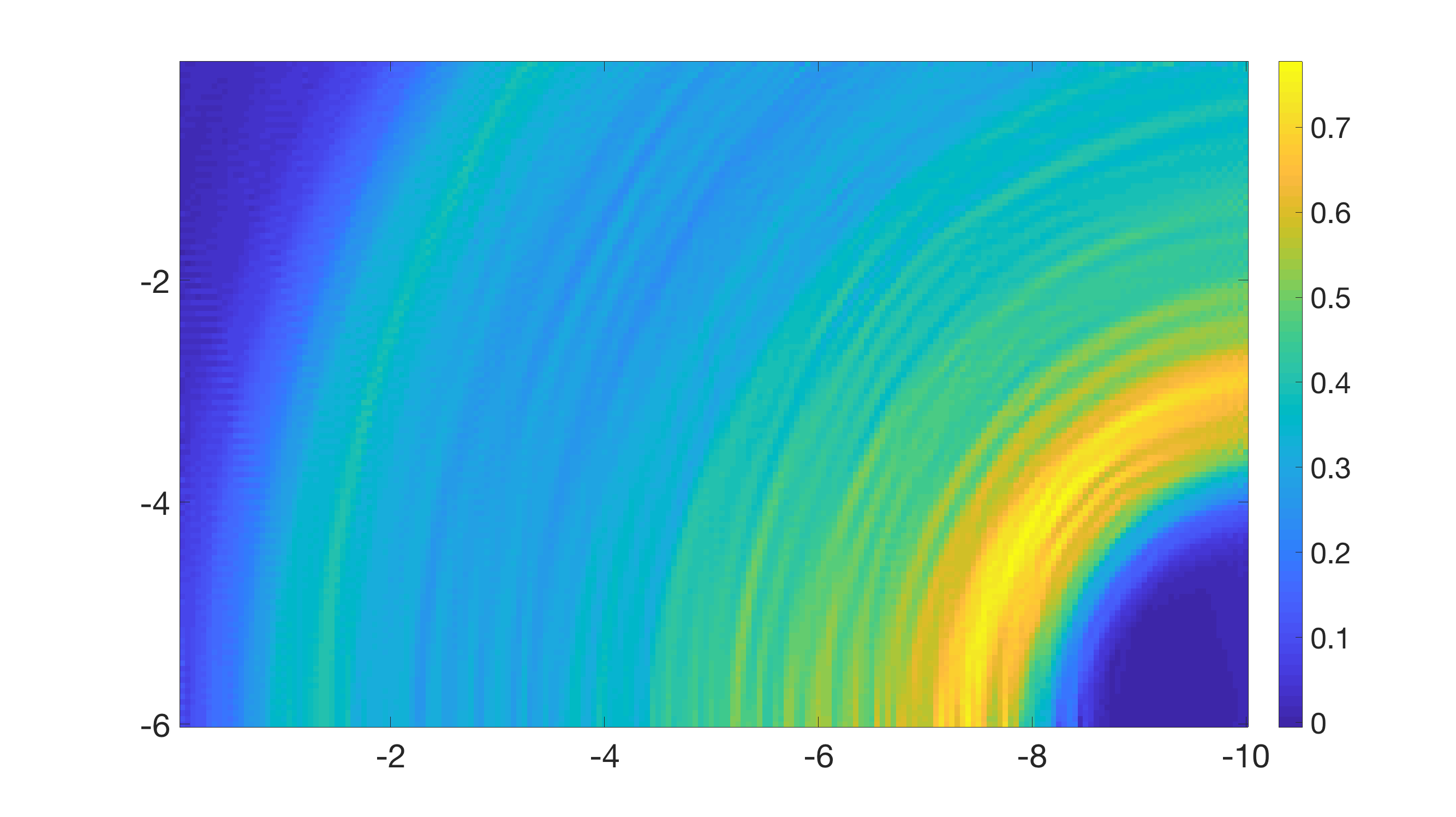}
  \end{minipage}
  \caption{The invariant probability density function of the randomly
    perturbed system. Left panel: Invariant probability density
    function of the Lorenz oscillator with $\sigma =
    0.3$. Right panel: Invariant probability density function of the R\"ossler oscillator with $\sigma = 0.1$.  The numerical solution is projected to the XY-plane for the
    purpose of easier visualization. }
\label{rand}
\end{figure}

\section{Conclusion}

In this paper we present a hybrid numerical method that solves the
steady state Fokker-Planck equation. The numerical discretization
scheme (finite difference scheme, finite element scheme, or Galerkin
method) without boundary condition gives a linear constraint. A
low-accuracy numerical solution produced by the Monte Carlo simulation (or
other variants) serves as a reference solution. The problem is then
converted to an optimization problem, which looks for the least squares
solution with respect to the reference solution, under the linear
constraint given by the numerical discretization scheme. 

The main advantage of this hybrid approach is that it drops the
dependence on boundary conditions. Hence we can compute the steady
state Fokker-Planck equation in any local area, while the traditional
numerical PDE approach has to use a large enough domain in order to apply the
zero boundary condition. This makes a significant difference if one
wants to study the invariant probability density function in a local
area in the vicinity of a strange attractor. The Monte Carlo simulation gives both
good flexibility and some limitations to this hybrid method. Our
simulation shows that the
hybrid method can
tolerate ``local'' fluctuations in the reference solution very well. However, if the
Monte Carlo simulation result has significant and systemic bias, the hybrid method cannot
completely recover the invariant probability density function. Also, the Monte Carlo
simulation usually only generates very few samples (or no sample) in regions that are very far
away from the attractor. In these regions, the hybrid method cannot
recover the tail effectively, while the traditional PDE solver usually
performs better.

This paper serves as the first paper of a series of
investigations. Under this data-driven framework, lots of improvements
can be made to both the Monte Carlo simulation and the numerical PDE
approach. For example, because our data-driven framework does not rely on
boundary conditions, one can use divide-and-conquer strategy to divide the domain into many ``blocks''. Our preliminary work shows
that this approach can significantly accelerate the computation. Another potential improvement is to use some recently
developed sampling techniques in the Monte Carlo simulation. In particular, if
the noise term is large enough, the Gaussian mixture method reported
in \cite{chen2017beating,
  chen2018efficient} can significantly reduce the cost of Monte Carlo
simulations for high-dimensional problems. We expect to incorporate
this sampling technique into our framework in the future.

It remains to comment on the extension to the time dependent
Fokker-Planck equation, as the time evolution of the probability
density function is very importent in many applications. Our
hybrid method can be extended to the time-dependent case after
minor modifications. To use the hybrid method, at each time step, the classical PDE solver
should be replaced by a least squares optimization problem. More
precisely, let $\mathbf{u}_{n}$ denote the numerical solution to a
time dependent Fokker-Planck equation at time step $n$. Then the
numerical solution $\mathbf{u}_{n+1}$ at the next step is obtained by solving
an optimization problem
\begin{eqnarray}
\label{lsqt}
 & \mbox{min} & \| \mathbf{u}_{n+1} - \mathbf{v}_{n+1} \| \\\nonumber
&\mbox{subject to} & \mathbf{A} \mathbf{u}_{n+1} = \mathbf{b}_{n} \,,
\end{eqnarray}
where the linear contraint $\mathbf{A} \mathbf{u}_{n+1} = \mathbf{b}_{n}$ comes
from the numerical discretization scheme (such as implicit Euler scheme or Crank-Nicolson
scheme) for the time dependent Fokker-Planck
equation, and $\mathbf{v}_{n+1}$ is a probability density function
(with lower accuracy) generated by the Monte Carlo
simulation.

\section{Acknowledgement}
The author thanks his former students Ms. Lily Chou and Ms. Huangyi
Shi for some preliminary numerical simulations related to this project.

\bibliography{myref}
\bibliographystyle{amsplain}
\end{document}